\newtheorem{theorem}{Theorem}[section]
\newenvironment{proof}{{\noindent\it Proof.}\quad}{\hfill $\square$\\}
\begin{document}
\title{The path of hyperinterpolation: A survey}

\author{Congpei An\footnotemark[1]
       \quad \text{and}\quad Jiashu Ran\footnotemark[2] \quad
       \text{and} \quad Hao-Ning Wu\footnotemark[3]}
\renewcommand{\thefootnote}{\fnsymbol{footnote}}
\footnotetext[1]{School of Mathematics and Statistics, Guizhou University, Guiyang 550025, Guizhou, China (andbachcp@gmail.com)}
\footnotetext[2]{Department of Mathematical and Statistical Sciences, University of Alberta, Edmonton, Alberta T6G 2G1, Canada (jran@ualberta.ca)}
\footnotetext[3]{Department of Mathematics, University of Georgia, Athens, GA 30602, USA (hnwu@uga.edu)}
\maketitle

\begin{abstract}
This paper surveys hyperinterpolation, a quadrature-based approximation scheme. We cover classical results, provide examples on several domains, review recent progress on relaxed quadrature exactness, introduce methodological variants, and discuss applications to differential and integral equations.
\end{abstract}

\textbf{Keywords: }{hyperinterpolation, quadrature, Marcinkiewicz--Zygmund inequalities, multivariate polynomial approximation}

\textbf{AMS subject classifications.}  65D15, 41A10, 65D32

\section{Introduction}

Hyperinterpolation, a discrete analogue of the $L^2$ orthogonal projection, is a powerful technique for multivariate polynomial approximation introduced by Ian H. Sloan in \cite{sloan1995polynomial}. Let $\Omega\subset\mathbb{R}^d$ be a bounded region with a given positive measure $\mathrm{d} \omega$, which is either the closure of a connected open domain or a smooth closed lower-dimensional manifold in $\mathbb{R}^d$. This region is assumed to have finite measure with respect to $\mathrm{d} \omega$, that is, $V:=\int_{\Omega}\mathrm{d}\omega <\infty$. We wish to approximate a function $f\in C(\Omega)$ by a polynomial on $\Omega$. For lower-dimensional manifolds, this polynomial is understood as the restriction of a polynomial to the manifold, but we will simply refer to it as a ``polynomial.'' We denote by $\mathbb{P}_n \subset L^2(\Omega)$ the linear space of polynomials on $\Omega$ of degree at most $n$, equipped with the $L^2$ inner product
\begin{equation}\label{equ:innerproduct}
\langle f, g\rangle=\int_{\Omega} fg \,\mathrm{d}\omega.
\end{equation}
Let $\{p_1, p_2 \ldots, p_{d_n}\} \subset \mathbb{P}_n$ be an orthonormal basis of $\mathbb{P}_n$ in the sense of $\left\langle p_{\ell}, p_{\ell^{\prime}}\right\rangle=\delta_{\ell \ell^{\prime}}$ for $1 \leq \ell, \ell^{\prime} \leq d_n$, where $d_n=\operatorname{dim} \mathbb{P}_n$ is the dimension of $\mathbb{P}_n$, and $\delta_{\ell \ell^{\prime}}$ is the Kronecker delta. Given an $L^2$ function $f$, the $L^2$ \emph{orthogonal projection} of $f$ onto the space $\mathbb{P}_n$ is defined as
\begin{equation}\label{equ:projection}
\mathcal{P}_n f:=\sum_{\ell=1}^{d_n}\left\langle f, p_{\ell}\right\rangle p_{\ell} \in \mathbb{P}_n.
\end{equation}
However, the orthogonal projection is not computationally feasible because its coefficients involve inner products that generally cannot be evaluated exactly.

In the early 1990s, Sloan became intrigued by the conundrum of whether the interpolation of a periodic function on an interval (or equivalently, for a function on a circle) has properties as good as the orthogonal projection \eqref{equ:projection}. While interpolation on spheres and other multivariate domains remains problematic \cite{MR1845243}, a discrete approximation with the right properties offers a viable alternative, albeit one that requires more points. This approximation, now known as \emph{hyperinterpolation}, appeared in the seminal paper \cite{sloan1995polynomial}. The prefix ``hyper-'' signifies that it uses more points than standard interpolation. Sloan credited the impulsion for this new name to Werner Rheinboldt, who urged him to find an alternative to interpolation. For a brief history behind hyperinterpolation, we refer the reader to Sloan's memoir \cite{Sloan2018fourunate}.

Constructing hyperinterpolants requires an $m$-point quadrature rule of the form
\begin{equation}\label{equ:quad}
\sum_{j=1}^m w_j g(x_j) \approx \int_{\Omega} g \,\mathrm{d}\omega,
\end{equation}
where the quadrature points $x_j$ belong to $\Omega$ and weights $w_j$ are all positive for $j=1,2, \ldots, m$. Assuming that the quadrature rule \eqref{equ:quad} has exactness degree $2n$, i.e.,
\begin{equation}\label{equ:exactness}
\sum_{j=1}^m w_j g(x_j) = \int_{\Omega} g \,\mathrm{d}\omega\quad\forall g\in\mathbb{P}_{2n},
\end{equation}
the hyperinterpolation operator $\mathcal{L}_n: C(\Omega) \rightarrow \mathbb{P}_n$ maps a continuous function $f \in C(\Omega)$ to
\begin{equation}\label{equ:hyperinterpolation}
\mathcal{L}_n f:=\sum_{\ell=1}^{d_n}\left\langle f, p_{\ell}\right\rangle_m p_{\ell},
\end{equation}
where
\begin{equation}\label{equ:discreteinnerproduct}
\langle f,g\rangle_m:=\sum_{j=1}^m w_j f(x_j) g(x_j)
\end{equation}
is a ``discrete version'' of the $L^2$ inner product \eqref{equ:innerproduct}. Given $f\in C(\Omega)$, let $\mathcal{L}_nf\in\mathbb{P}_n$ be the hyperinterpolation defined by \eqref{equ:hyperinterpolation} with the quadrature rule satisfying \eqref{equ:exactness}. Sloan showed that the $\mathcal{L}_n$ is a projection operator in the sense of 
\[\mathcal{L}_nf = f\quad\forall f\in\mathbb{P}_n.\]
Moreover, the following stability and error bound for hyperinterpolation hold:
\begin{equation}\label{equ:stability}
\left\|\mathcal{L}_n f\right\|_2 \leq V^{1 / 2}\|f\|_{\infty},
\end{equation}
and
\begin{equation}\label{equ:errorbound}
\left\|\mathcal{L}_n f-f\right\|_2 \leq 2 V^{1 / 2} E_n(f),
\end{equation}
where $E_n(f):=\inf_{p\in\mathbb{P}_n}\|f-p\|_{\infty}$ denotes the best uniform approximation error of $f$. Thus, hyperinterpolation acts as a discrete analogue of the orthogonal projection \eqref{equ:projection} from $C(\Omega)$ onto $\mathbb{P}_n$. However, an $L^2\rightarrow L^2$ theory is not available in the literature.

Returning to Sloan's original insight in \cite{sloan1995polynomial}, hyperinterpolation reduces to \emph{interpolation} when the quadrature rule \eqref{equ:quad} is minimal.
An $m$-point quadrature rule \eqref{equ:quad} is said to be {minimal} if it is exact for all polynomials of degree at most $2n$ and $m = \dim \mathbb{P}_n$. Sloan showed that the classical interpolation condition
\[\mathcal{L}_nf(x_j) = f(x_j),\quad j=1,2,\ldots,m,\]
holds for arbitrary $f\in C(\Omega)$ if and only if the quadrature \eqref{equ:quad} is minimal. A famous example of a minimal rule is Gaussian quadrature on the interval. However, such rules are not always available in higher dimensions. Moreover, hyperinterpolation is closely related to the weighted \emph{least squares} approximation problem. Given $f\in C(\Omega)$, the hyperinterpolant is the minimizer of the following discrete least squares problem:
\begin{equation}\label{equ:LS}
\min _{p \in \mathbb{P}_n}~~\sum_{j=1}^m w_j\left[f(x_j)-p(x_j)\right]^2.
\end{equation}
Therefore, hyperinterpolation provides a unifying framework that connects three fundamental approximation schemes: $L^2$ orthogonal projection, interpolation, and least squares approximation. Furthermore, it is computationally straightforward to implement, making it a method that deserves significant attention.

The idea of discretizing $L^2$ orthogonal projection coefficients using quadrature rules is straightforward and appeared in the literature prior to Sloan's 1995 paper \cite{sloan1995polynomial}. For example, Atkinson and Bogomolny investigated such a discrete orthogonal projection in their work on the discrete Galerkin method for Fredholm integral equations of the second kind; see \cite[Section 4]{MR878693}. However, Sloan was the first to systematically develop it as a general-purpose approximation scheme, coining the term ``hyperinterpolation''.

\section{Hyperinterpolation on various domains}
\label{sec:domain}

Since its introduction by Sloan in  \cite{sloan1995polynomial}, hyperinterpolation has been extensively developed for approximations on the sphere \cite{dai2006generalized,MR2274179,le2001uniform,MR1761902,reimer2002generalized,MR1619076,zbMATH01421286,MR1845243}. Subsequent research has expanded its application to other regions, such as the disk \cite{hansen2009norm}, the square \cite{caliari2007hyperinterpolation}, the cube \cite{caliari2008hyperinterpolation,MR2486128,wang2014norm}, the ball \cite{MR3011254}, the spherical shell \cite{MR3554421,MR3739962}, the spherical triangle \cite{sommariva2021numerical}, the spherical polygon \cite{MR4859492}, the spherical zone \cite{chen2025spherical_zone}, and so on. In all these works, defining the hyperinterpolation operator on a domain $\Omega\subset\mathbb{R}^d$ depends on two prerequisites:
\begin{itemize}
\item the existence of a system of orthogonal polynomials on $\Omega$, and
\item the availability of a quadrature rule \eqref{equ:quad} satisfying the exactness condition \eqref{equ:exactness}.
\end{itemize}

In the rest of this section, we summarize orthogonal polynomials and quadrature rules on several domains. Due to space constraints, our summary is not exhaustive; for domains not covered here, we refer the reader to the references mentioned above.

\subsection{The interval}
For approximating functions on the weighted interval $[-1,1] \subset \mathbb{R}$, where the weight function is given by $\mathrm{d}\omega = r(x) \mathrm{d}x$ with $r\in L^1(-1,1)$, we can use the well-studied univariate orthogonal polynomials with respect to the weight function $r(x)$ as basis functions. In this case, $V=2$ and $d_n=\dim\mathbb{P}_n = n+1$.
Gauss quadrature rules are a perfect candidate for the quadrature rule \eqref{equ:quad}. An $m$-point Gauss rule is exact for polynomials of degree at most $2m-1$, not only fulfilling the exactness assumption \eqref{equ:exactness} but also making it a minimal quadrature rule. The classical reference on this case is Szeg\H{o}'s masterpiece \cite{MR0000077} on orthogonal polynomials.

\subsection{The circle}
For approximating functions on the unit circle $\mathbb{S}^1:=\{x\in\mathbb{R}^2:\|x\|_2= 1\} \subset \mathbb{R}^2$ with angular measure $\mathrm{d}\omega$ in radians, we can use the trigonometric basis $\{1, \cos \theta, \sin \theta, \ldots, \cos n \theta, \sin n \theta\}$. In this case, $V = 2\pi$ and 
$d_n=\dim\mathbb{P}_n = 2n+1$. For the quadrature rule, we employ the rectangle rule with 
$m$ equal intervals of length 
$2 \pi / m$. 
This rule is exact for all trigonometric polynomials of degree at most $\leqslant m-1$, i.e.,
\[
\frac{2 \pi}{m} \sum_{j=0}^{m-1} g\left(\frac{2 \pi j}{m}\right)=\int_0^{2 \pi} g(\theta)\, \mathrm{d} \theta \quad \forall g \in \mathbb{P}_{m-1}.
\]
Thus, the exactness condition \eqref{equ:exactness} is satisfied when $m\geq 2n+1$, also making this a minimal quadrature rule. The resulting hyperinterpolation is closely related to a result stated by Zygmund in \cite[Chapter X, Theorem 7.1]{MR107776}.

\subsection{The square}
For approximating functions on the square $[-1,1]^2\subset\mathbb{R}^2$ with the normalized product Chebyshev measure $\mathrm{d}\omega=\frac{1}{\pi^2} \frac{\mathrm{d} x_1 \mathrm{d} x_2}{\sqrt{1-x_1^2} \sqrt{1-x_2^2}}$, Caliari, De Marchi, and Vianello proposed in \cite{caliari2007hyperinterpolation} to employ tensor products of normalized Chebyshev polynomials as the basis. In this case, $V = 4$ and 
\begin{equation}\label{equ:squaredn}
d_n=\dim\mathbb{P}_n = \binom{n+2}{2} = \frac{(n+2)(n+1)}{2}.
\end{equation}
They considered quadrature rules based on Xu points \cite{MR1418495}. Starting from $n+2$ Chebyshev--Lobatto points on the interval $[-1,1]$,
\[
z_k=z_{k, n+1}=\cos \frac{k \pi}{n+1}, \quad k=0, \ldots, n+1, \quad n=2 m-1, \quad m \geq 1,
\]
the Xu points on the square $\Omega$ are defined as the two dimensional Chebyshev-like set $X_N:=A \cup B$
of cardinality $(n+1)(n+3)/2 > d_n$, where 
\[A=\left\{\left(z_{2 i}, z_{2 j+1}\right), 0 \leq i \leq m, 0 \leq j \leq m-1\right\}, \quad B=\left\{\left(z_{2 i+1}, z_{2 j}\right), 0 \leq i \leq m-1,0 \leq j \leq m\right\}.\]
The corresponding quadrature rule has exactness degree of $2n+1$, satisfying the exactness condition \eqref{equ:exactness}. However, since the cardinality of $X_N$ exceeds $d_n$, the resulting quadrature rule is not a minimal quadrature rule.

This approach using Chebyshev--Lobatto and Xu points has been extended to cubes and hypercubes \cite{caliari2008hyperinterpolation,MR2486128,wang2014norm}. A key limitation, however, is that generating a quadrature rule which satisfies the exactness condition \eqref{equ:exactness} requires more points than the dimension of $\mathbb{P}_n$. In a $d$-cube $\Omega = \{x\in\mathbb{R}^d:\|x\|_{\infty}\leq 1\}$, for example, the dimension
\[d_n = \dim\mathbb{P}_n = \binom{n+d}{d}=\frac{n^d}{d!}(1+o(1))\]
grows polynomially, yet the number of necessary quadrature points is typically much larger.

On the square $[-1,1]^2$, beginning with $n+1$ Chebyshev--Lobatto points on $[-1,1]$, one can also construct Padua points \cite{MR2271722,MR2350184,MR2457662}, a point set of size $d_n$ matching the dimension of $\mathbb{P}_n$. These points are the only known unisolvent set for bivariate interpolation with a minimally growing Lebesgue constant. However, the resulting quadrature rule is not exact on the full space $\mathbb{P}_{2n}$. Its exactness is limited to the subspace of polynomials $p(x_1,x_2)$ of degree at most $2n$ that are also orthogonal to the degree-$2n$ Chebyshev polynomial in $x_2$. This orthogonality restriction prevents it from achieving the full exactness required by \eqref{equ:exactness}. Consequently, the construction of hyperinterpolation based directly on Padua points remains an open problem.

\subsection{On the disk}
For approximating functions on the disk $\{x\in\mathbb{R}^2:\|x\|_2\leq1\}\subset\mathbb{R}^2$ with area measure $\mathrm{d}\omega$, Hansen, Atkinson, and Chien considered in \cite{hansen2009norm} using ridge polynomials \cite {MR397240} as a basis. In this case, $V= \pi$ and $d_n=\dim\mathbb{P}_n$ is the same as \eqref{equ:squaredn}.

To construct the quadrature rule, they expressed functions in radial-azimuthal coordinates as $g(x) = g(r,\phi)$. Their approach combines a trapezoidal rule in the azimuthal direction with a Gaussian quadrature rule in the radial direction, leading to the discrete inner product: 
\[\langle f, g\rangle_m :=\sum_{\ell=0}^n \sum_{m=0}^{2 n} w_\ell \frac{2}{2 n+1} r_\ell f\left(r_\ell, \frac{2 \pi m}{2 n+1}\right) g\left(r_\ell, \frac{2 \pi m}{2 n+1}\right) ,\]
where $\{w_\ell\}$ are weights of the Gauss--Legendre quadrature on $[0,1]$. This rule is exact for all polynomials $f,g\in\mathbb{P}_n$. The total number of quadrature points is $(n+1)(2n+1)$,  which exceeds the dimension $d_n$ of the polynomial space for all $n>0$, with equality holding only in the trivial case $n=0$.

\subsection{On the sphere}
For approximating functions on the unit two-sphere $\mathbb{S}^2:=\{x\in\mathbb{R}^3:\|x\|_2=1\}\subset\mathbb{R}^3$ with surface area measure ${\mathrm{d}} \omega$, we may employ spherical harmonics $\left\{Y_{\ell, k}: 0 \leq \ell \leq n, 1 \leq k \leq 2 \ell+1\right\}$ as an orthonormal basis. We refer the reader to \cite{MR0199449} for details on spherical harmonics. In this case, $V = 4\pi$ and $d_n=\dim\mathbb{P}_n =(n+1)^2$.
A natural candidate for quadrature rules satisfying the exactness assumption \eqref{equ:exactness} is the spherical $t$-design, introduced by Delsarte, Goethals, and Seidel in \cite{delsarte1991geometriae}. A point set $\left\{x_1, x_2, \ldots, x_m\right\} \subset \mathbb{S}^2$ is said to be a spherical $t$-design if it satisfies
\[
\frac{1}{m} \sum_{j=1}^m p(x_j)=\frac{1}{4 \pi} \int_{\mathbb{S}^2} p\,\mathrm{d} \omega \quad \forall p \in \mathbb{P}_t.
\]
That is, spherical $t$-design is a set of points on the sphere such that an equal-weight quadrature rule in these points integrates all (spherical) polynomials up to degree $t$ exactly. In the original manuscript \cite{delsarte1991geometriae}, lower bounds for the number $m$ of points needed to construct a spherical $t$-design were derived. It was then shown by Bannai and Damerell in \cite{MR519045,MR576179} that spherical $t$-designs of $d_n=\dim\mathbb{P}_t$ points exist only for a
few small values of $t$. In other words, despite the attractive properties of spherical $t$-esigns, their corresponding quadrature rules cannot be minimal for general $t$.

It is worth noting that spherical $t$-design  was first combined with hyperinterpolation in \cite{sloan1995polynomial}. Moreover, 
the numerical realization of hyperinterpolation by well conditioned  spherical $t$-designs \cite{an2010well} can be found in  \cite{an2011distribution, an2012regularized}. There are also other quadrature rules on the sphere $\mathbb{S}^2$, we refer the reader to the discussion in \cite[Section 4.2]{sloan1995polynomial}. 

For the $d$-sphere $\mathbb{S}^{d} := \{x \in \mathbb{R}^{d+1} : \|x\|_2 = 1\}$ with $d \geq 2$, the theory of spherical harmonics is also established. Bondarenko, Radchenko, and Viazovska \cite{Viazovska2013optimal} proved that for each $m \geq c_dt^d$, where $c_d$ is a positive constant depending only on $d$, there exists a spherical $t$-design consisting of $m$ points on $\mathbb{S}^d$. Therefore, one can conceptually define hyperinterpolation on $\mathbb{S}^{d}$. However, its practical implementation remains challenging due to the difficulty of explicitly constructing such spherical $t$-designs on high-dimensional spheres.

\section{Recent advances in relaxed quadrature exactness}
\label{sec:relaxed}

While hyperinterpolation is a powerful tool for multivariate approximation, its construction necessitates a quadrature rule with a degree of exactness $2n$. This foundational principle, however, presents significant practical and theoretical challenges. As discussed in Section \ref{sec:domain}, constructing such high-precision rules on general domains imposes stringent demands on both the number and distribution of quadrature points. In the literature on hyperinterpolation, Hesse and Sloan noted in \cite{MR2274179}: 
\begin{quote}
\emph{Hyperinterpolation is easy to compute. As a disadvantage, hyperinterpolation needs function values at the given points of the positive-weight numerical integration rule with degree of polynomial exactness $2n$ in the definition of the hyperinterpolation operator.}
\end{quote}
These practical barriers raise a deeper theoretical question: is such high exactness truly necessary for accuracy? This question also resonates with a growing scholarly trend that questions the reliability of exactness as a guide for numerical accuracy. Trefethen, for instance, argued in \cite{trefethen2022exactness} that exactness is an algebraic property concerned with exact zeros, whereas quadrature is fundamentally an analytic problem concerned with small errors.

To address this question, An and Wu proposed in \cite{an2022quadrature,an2024bypassing} relaxing the quadrature exactness assumption \eqref{equ:exactness} by Marcinkiewicz--Zygmund inequalities.

\subsection{Marcinkiewicz--Zygmund (MZ) inequalities}
The MZ inequality in the $L^2$ space states the following: Let $n\geq 0$ be an integer. There exists an absolution constant $\eta \in [0,1)$ such that 
\begin{equation}\label{equ:MZ}
\left|\sum_{j=1}^m w_j p(x_j)^2-\int_{\Omega} p^2 \,\mathrm{d} \omega\right| \leq \eta \int_{\Omega} p^2 \,\mathrm{d} \omega \quad \forall p \in \mathbb{P}_n.
\end{equation}
Since the pioneering work of Marcinkiewicz and Zygmund \cite{Marcinkiewicz1937}, there has been extensive research on establishing the MZ inequalities across various domains. In one dimension, these inequalities have been established on the torus \cite{zbMATH00539969}, the real line \cite{zbMATH01150002,zbMATH01398363}, and the unit circle \cite{zbMATH01289969}. Subsequent research extended these results to compact manifolds, with significant advances on spheres \cite{zbMATH05558973,zbMATH07844329,zbMATH05206088,mhaskar2001spherical} and general compact manifolds \cite{zbMATH05795114,filbir2011marcinkiewicz,zbMATH07227728}. In contrast, the study of the MZ inequalities on multivariate compact domains in Euclidean spaces remains relatively limited, with notable contributions for spherical caps \cite{zbMATH05676005}, preliminary results results for several domains \cite{MR3746524}, and scattered data on polygons \cite{wu2024marcinkiewicz}.

\subsection{Hyperinterpolation using exactness-relaxed quadrature rules}

To relax the strict quadrature exactness condition \eqref{equ:exactness}, An and Wu proposed in \cite{an2022quadrature,an2024bypassing} leveraging MZ inequalities \eqref{equ:MZ}. These inequalities provide a natural framework that generalizes the exactness assumption \eqref{equ:exactness}; indeed, exactness \eqref{equ:exactness} is recovered as the special case for $\eta = 0$. Their approach in \cite{an2022quadrature} reduces the required quadrature exactness from degree $2n$ to a lower degree of only $n+k$, where $0 < k \leq n$, and additionally assume the used quadrature rule \eqref{equ:quad} satisfies the MZ inequality \eqref{equ:MZ} with constant $\eta$. Then for $f\in C(\Omega)$,
\[
\left\|\mathcal{L}_n f\right\|_2 \leq \frac{V^{1 / 2}}{\sqrt{1-\eta}}\|f\|_{\infty}
\]
and
\begin{equation}\label{equ:errorbound1}
\left\|\mathcal{L}_n f-f\right\|_2 \leq\left(\frac{1}{\sqrt{1-\eta}}+1\right) V^{1 / 2} E_k(f).
\end{equation}
If the degree of quadrature exactness is not relaxed, i.e., $k=n$ and $\eta= 0$, then the approximation theory is
the same as \eqref{equ:stability} and \eqref{equ:errorbound}. If $0 < k < n$, then as a cost of the relaxation of
exactness, the error estimation \eqref{equ:errorbound1} is now controlled by $E_k(f)$ rather than $E_n(f)$. Since $E_k ( f ) \geq E_n ( f )$ if $k < n$, this estimation \eqref{equ:errorbound1} reveals an effect of relaxing the
quadrature exactness. That is, we can use fewer quadrature points than the original
hyperinterpolation, but the corresponding error estimation will be somewhat amplified. An and Wu also extended their results in \cite{an2024bypassing} to a totally relaxed case, discarding the quadrature exactness assumption and assuming the MZ ineuality \eqref{equ:MZ} only. They conducted a case study on spheres with error bounds derived. This bound not only depends on polynomial degree $n$, but also depends on the geometry of $\{x_j\}_{j=1}^m$.

\subsection{Significance of such relaxation}
Relaxing the exactness requirement offers two major practical advantages. Firstly, it enables a substantial reduction in the number of quadrature points, which is critical in high-dimensional settings where the required number grows exponentially with the dimension. Secondly, it greatly expands the range of admissible quadrature rules. This allows for the construction of hyperinterpolants with proven error bounds even from rules based on scattered data or random points with simple weights, as is often encountered in engineering practice. Although this flexibility may come at the cost of optimal approximation rates, it is essential for bridging the gap between theoretical numerical analysis and practical application.

\section{Variants of hyperinterpolation}
Since the introduction of hyperinterpolation in \cite{sloan1995polynomial}, several variants have been developed for different purposes. We briefly summarize their motivations and definitions, referring the reader to the original papers for detailed approximation theory.

\subsection{Generalized hyperinterpolation}
While initial work established a complete theory for $\mathcal{L}_n: C(\Omega) \to L^2(\Omega)$, 
the corresponding analysis for $C(\Omega) \to C(\Omega)$ remained undeveloped. 
The first approach to achieving a uniformly bounded operator norm  was through the \emph{generalized hyperinterpolation} \cite{dai2006generalized,reimer2002generalized}, 
a framework initially formulated and studied exclusively on spheres. We denote by $|\mathbb{S}^{d}|$ the surface area of $\mathbb{S}^d$ here and throughout. Let $C_{\ell}^{\frac{d-1}{2}}$ be the Gegenbauer polynomial of degree $\ell$ and of index $\frac{d-1}{2}$ on $\mathbb{S}^{d}$ and assume that
\begin{equation*}
    D_{n}(t)=\sum_{\ell=0}^{n}a_{n\ell} \frac{2\ell+d-1}{(d-1)|\mathbb{S}^d|} C_{\ell}^{\frac{d-1}{2}}(t),\quad n = 1, 2,\cdots, \quad t \in [-1,1],
\end{equation*}
is a sequence of polynomials on $[-1,1]$ satisfying $ \int_{\mathbb{S}^{d}} D_{n}({x}^{\top}{y}) \,\mathrm{d}\omega({y})=1$ for $x\in\mathbb{S}^d$ and $\sup_{n}\int_{0}^{\pi} (1+n \theta)^2 |D_n(\cos \theta)|\sin^{d-1}\theta \, \mathrm{d}\theta <\infty$.
Then the {generalized hyperinterpolation operators $\mathcal{GL}_{n}$ associated to $\{D_n(t)\}_{n=1}^{\infty}$ are defined as
    \begin{equation*}
        \mathcal{GL}_{n} f({x}) = \sum_{j=1}^{m}w_j f({x}_j)D_{n}({x}\cdot {x}_j).
    \end{equation*}
 The generalized hyperinterpolation operator defined by Reimer in \cite{reimer2002generalized} assumes the positivity of the kernels \( D_n \). Under this condition, it was proved that the approximation error can be uniformly bounded by the first-order modulus of continuity \cite{reimer2002generalized}. Later, Dai established in \cite{dai2006generalized} that the uniform approximation error of \( \mathcal{GL}_{n} \) is controlled by the second-order modulus of continuity, thereby bypassing the need for the positivity assumption on the kernels \( D_n \).

\subsection{Filtered hyperinterpolation}
To obtain the uniform convergence, Sloan and Womersley considered another approach in \cite{MR2875103,sloan2012filtered},  involving filtering techniques. Let $h$ be a continuous filter function defined as 
\begin{equation*}
h(x):=\left\{\begin{array}{ll}
1,  &\text{for}~x \in [0,1/2] ,\\
0,  & \text{for}~x\in[1,\infty).
\end{array}\right.
\end{equation*}    
Then the \emph{filtered hyperinterpolation} is defined as \cite{sloan2012filtered}
\[\mathcal{FL}_nf(x) = \sum_{j=1}^mw_jf(x_j)H_n(x_j,x), \quad \forall x\in \mathbb{S}^d,\]
with
\[H_n(x,y) = \sum_{\ell=0}^{n}h\left(\frac{\ell}{n}\right)\sum_{k=1}^{Z(d,\ell)} Y_{\ell,k}(x)Y_{\ell,k}(y), \quad \forall x,y \in \mathbb{S}^d,
\]
where 
$\{Y_{\ell,k}:\ell=0,1,\ldots,n,~k = 1,2,\ldots,Z(d,\ell)\}$ are  orthonormal spherical harmonics \cite{MR0199449}, which can be a basis for the polynomial space over $\mathbb{S}^d$. 
It can be shown that the sum of the $(d+1)$th forward difference of $\{h(\ell/n)\}_{\ell=0}^{n}$ is bounded, then the resulting operator $\mathcal{FL}_n$ was shown in \cite{sloan2012filtered} to equip a uniform operator norm from $C(\mathbb{S}^d)$ to $C(\mathbb{S}^d)$. Moreover, hyperinterpolation and filtered hyperinterpolation have a nice consistency property, with the special case on $\mathbb{S}^2$ using spherical $t$-designs revealed in \cite[Theorem 4.3.2]{an2011distribution}.
\begin{theorem}\label{thm:invariant}
    Given a positive-weight quadrature rule \eqref{equ:quad} with exactness $2n$ on $\mathbb{S}^d$, the exact integration of both hyperinterpolant and filtered hyperinterpolant recovers the quadrature rule used in its construction in the sense of
    \begin{equation*}
        \int_{\mathbb{S}^d} \mathcal{L}_n f({x}) \,\mathrm{d}\omega(x) = \int_{\mathbb{S}^d} \mathcal{FL}_{n} f({x}) \,\mathrm{d}\omega({x}) = \sum_{j=1}^{m} w_j f({{x}_j}).
    \end{equation*}
\end{theorem}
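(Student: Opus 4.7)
The plan is to reduce the integration of each operator to a single term by exploiting orthogonality against the constant function on $\mathbb{S}^d$. In both cases, the computation boils down to the fact that all non-constant contributions to the underlying expansion integrate to zero, while the constant contribution is normalized to unity.

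For the hyperinterpolant $\mathcal{L}_n$, I would first exchange summation and integration in the defining formula \eqref{equ:hyperinterpolation} to obtain
\begin{equation*}
\int_{\mathbb{S}^d} \mathcal{L}_n f \,\mathrm{d}\omega = \sum_{\ell=1}^{d_n} \langle f, p_\ell\rangle_m \int_{\mathbb{S}^d} p_\ell \,\mathrm{d}\omega.
\end{equation*}
Without loss of generality I take $p_1 \equiv 1/\sqrt{|\mathbb{S}^d|}$; orthonormality then forces $\int_{\mathbb{S}^d} p_\ell \,\mathrm{d}\omega = \langle p_\ell, 1\rangle = \sqrt{|\mathbb{S}^d|}\,\delta_{\ell,1}$, so only the $\ell=1$ summand survives. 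Expanding $\langle f, p_1\rangle_m$ via \eqref{equ:discreteinnerproduct} gives $\sqrt{|\mathbb{S}^d|}\cdot \langle f, p_1\rangle_m = \sum_{j=1}^m w_j f(x_j)$, which is the target identity.

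For the filtered hyperinterpolant $\mathcal{FL}_n$, I would similarly swap the order of summation and integration to write
\begin{equation*}
\int_{\mathbb{S}^d} \mathcal{FL}_n f \,\mathrm{d}\omega(x) = \sum_{j=1}^m w_j f(x_j) \int_{\mathbb{S}^d} H_n(x_j, x)\,\mathrm{d}\omega(x),
\end{equation*}
and then insert the spherical-harmonic expansion of $H_n$. Integrating term-by-term and invoking $\int_{\mathbb{S}^d} Y_{\ell,k}\,\mathrm{d}\omega = \sqrt{|\mathbb{S}^d|}\,\delta_{\ell,0}\delta_{k,1}$, every block with $\ell \geq 1$ vanishes. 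The surviving $\ell=0$ term equals $h(0)\,Y_{0,1}(x_j)\sqrt{|\mathbb{S}^d|}$; since the filter definition forces $h(0)=1$ and $Y_{0,1}\equiv 1/\sqrt{|\mathbb{S}^d|}$, this equals $1$ independently of $j$, yielding the stated identity.

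The exactness degree $2n$ plays no essential role in the argument, beyond being what makes $\mathcal{L}_n$ a well-posed hyperinterpolant in the first place; the mechanism is orthogonality of non-constant polynomials (or non-zonal harmonics) to the constant, combined with the normalization $h(0)=1$. I do not expect any substantive obstacle here — the only thing to keep track of is consistent bookkeeping of the factor $\sqrt{|\mathbb{S}^d|}$ between the two parts of the proof.
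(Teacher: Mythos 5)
Your proposal is correct and uses essentially the same argument as the paper: exchange summation and integration, and observe that only the constant (degree-zero) component survives because all non-constant orthonormal basis functions integrate to zero, with the normalization $h(0)=1$ and $Y_{0,1}\equiv 1/\sqrt{|\mathbb{S}^d|}$ supplying the final constant. The only cosmetic difference is that the paper treats the filtered case first and obtains the hyperinterpolation case as the special case $h\equiv 1$, whereas you handle $\mathcal{L}_n$ directly with a general orthonormal basis; your added remark that the exactness degree $2n$ is not actually used in the computation is accurate.
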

\begin{proof}
We first prove the result for the case of filtered hyperinterpolation. Since $\sqrt{|\mathbb{S}^{d}|} \, Y_{0,1}=1$ and $\int_{\mathbb{S}^{d}} Y_{\ell,k} ({x})\, \mathrm{d}\omega({x})=0$ for $\ell >0$, we have 
\begin{equation*}
    \begin{aligned}
        \int_{\mathbb{S}^d} \mathcal{FL}_{n} f({x}) \,\mathrm{d}\omega(x)
        &=  \sum_{\ell=0}^{n}\sum_{k=1}^{Z(d,\ell)} \sum_{j=1}^{m} h\left(\frac{\ell}{n}\right) w_j f(x_j) Y_{\ell,k}(x_j)\int_{\mathbb{S}^d}  Y_{\ell,k}(x) \,\mathrm{d}\omega(x) \\
        &= \sum_{j=1}^{m} w_j f({x}_j) \frac{1}{\sqrt{|\mathbb{S}^{d}|}} \frac{1}{\sqrt{|\mathbb{S}^{d}|}} |\mathbb{S}^{d}| = \sum_{j=1}^{m} w_j f({x_j}). 
    \end{aligned}
\end{equation*}
 The case of hyperinterpolation now follows as a special case of $h\equiv 1$.
\end{proof}
Consequently, since both $\mathcal{L}_nf$ and $\mathcal{FL}_nf$ are polynomials of degree $n$, the exactness assumption \eqref{equ:exactness} leads to
\begin{equation*}
       \sum_{j=1}^{m} w_j\mathcal{L}_n f(x_j) =  \sum_{j=1}^{m} w_j\mathcal{FL}_{n} f(x_j) =  \sum_{j=1}^{m} w_j f({x_j}).
    \end{equation*}
    Yet we note that in general, $\mathcal{L}_nf$, $\mathcal{FL}_nf$, and $f$ have different values at the quadrature points.

The scheme of filtered hyperinterpolation has also been extended to general manifolds and the setting of distributed learning in \cite{MR4226998,MR4462410}. We note that Mhaskar also proposed in \cite{Mhaskar2005sphere,Mhaskar2006weighted} a fully discrete filtered polynomial approximation scheme that is equivalent to filtered hyperinterpolation.

\subsection{Regularized hyperinterpolation}
If samples $\{f(x_j)\}_{j=1}^m$ in the construction of hyperinterpolation are contaminated by noise, we may consider \emph{regularized hyperinterpolation}. This is achieved by adding an explicit regularization term $\mathcal{R}(p)$ onto the least squares approximation problem, resulting the regularized least squares approximation problem 
\begin{equation}\label{equ:regularizedLS}
\min _{p \in \mathbb{P}_n}~~\left\{\sum_{j=1}^m w_j\left[f(x_j)-p(x_j)\right]^2+\lambda\mathcal{R}(p)\right\}, \qquad \lambda >0.
\end{equation}
Let 
$$p(x) = \sum_{\ell=1}^{d_n}\beta_{\ell}p_{\ell}(x).$$
The following are several regularized hyperinterpolants:
\begin{itemize}
\item If $\mathcal{R}(p) = \sum_{\ell=1}^{d_n}|\beta_{\ell}|^2$, then we have the Tikhonov or $\ell_2$-regularized hyperinterpolation \cite{an2012regularized,an2020tikhonov}:
\[p(x) = \frac{1}{1+\lambda}\sum_{\ell=1}^{d_n}\langle f,p_{\ell}\rangle_m p_{\ell}(x).\]
\item If $\mathcal{R}(p) = 2\sum_{\ell=1}^{d_n}|\beta_{\ell}|$, then we have the Lasso or $\ell_1$-regularized hyperinterpolation \cite{MR4644764,an2021lasso}:
\[p(x) = \sum_{\ell=1}^{d_n}S_{\lambda}^{\text{soft}}\left(\langle f,p_{\ell}\rangle_m\right) p_{\ell}(x),\]
where 
\begin{equation*}
S_{\lambda}^{\text{soft}} (\alpha) = \begin{cases}
(|\alpha|-\lambda)\text{sign}(\alpha), & |\alpha|>\lambda,\\
0, & |\alpha|\leq \lambda,
\end{cases}
\end{equation*}
is the soft shrinkage operator \cite{donoho1994ideal}.
\item If $\mathcal{R}(p) = \sum_{\ell=1}^{d_n}\mathbbm{1}_{\beta_{\ell} \neq 0}$, then we have the hard thresholding or $\ell_0$-regularized hyperinterpolation \cite{an2025hard}:
\begin{equation*}
    p(x) = \sum_{\ell=1}^{d_n} S_{\lambda}^{\text{hard}}(\langle f, p_{\ell} \rangle_m)p_{\ell}(x),
\end{equation*}
where
\begin{equation*}
S_{\lambda}^{\text{hard}}(\alpha) = \begin{cases}
\alpha, & |\alpha|>\sqrt{\lambda},\\
0, & |\alpha|\leq \sqrt{\lambda},
\end{cases}
\end{equation*}
is the hard thresholding operator \cite{donoho1994ideal}.
\end{itemize}
Combinations of these regularization terms give rise to novel regularization schemes, such as hybrid hyperinterpolation \cite{an2025hybrid}, springback hyperinterpolation \cite{zbMATH07576449}, and recovery thresholding hyperinterpolation \cite{an2025recovery}.

We emphasize that these regularized hyperinterpolation variants likewise require the quadrature exactness assumption \eqref{equ:exactness}. This is a key assumption to derive closed-form solutions to the regularized least squares approximation problem \eqref{equ:regularizedLS}. The integration of recent developments in relaxed exactness, discussed in Section \ref{sec:relaxed}, presents an open research direction.

\subsection{Efficient hyperinterpolation}
If the function $f$ to be approximated contains an oscillatory or singular kernel in the form of 
\[f(x) = K(x)F(x),\]
where $K\in L^1(\Omega)$ is a real- or complex-valued absolutely
integrable function encoding oscillatory or singular behavior, which needs not be continuous or of one sign, and $F(x)\in C(\Omega)$ is a
continuous (and preferably smooth) function, An and Wu proposed the \emph{efficient hyperinterpolation} in \cite{an2024hyperinterpolation}.

It is well known that in the presence of a singular or highly oscillatory kernel $K$, it is inefficient to
evaluate the coefficients $\langle KF, p_{\ell}\rangle$ directly using some classical numerical integration rules. Instead, one shall evaluate them in a semi-analytical way: for the evaluation of an integral of the form $\int_{\Omega} K(x)F(x)\,\mathrm{d}\omega(x)$,
one shall replace $F$ by its polynomial interpolant or approximant of degree $n$ in terms of $F\approx\sum_{\ell=1}^{d_n}c_{\ell}p_{\ell}$, and evaluate the integral by

\begin{equation*}
\int_{\Omega} K(x)F(x)\,\mathrm{d}\omega(x)\approx \sum_{\ell=1}^{d_n}c_{\ell}\int_{\Omega} K(x)p_{\ell}(x)\,\mathrm{d}\omega(x).
\end{equation*}
This idea for numerical integration may be referred to as the \emph{product-integration rule} in the classical literature \cite{MR494863,MR577405,MR650061}. This rule was initially designed on $[-1,1]$ for $K\in L^1[-1,1]$ and $F\in{C}[-1,1]$, and it converges to the exact integral as the number of quadrature points approaches the infinity if $K\in L^p[-1,1]$ for some $p>1$ is additionally assumed. In the context of highly oscillatory integrals with an oscillatory $K\in {C}(\Omega)$, this approach is also known as the \emph{Filon-type method} \cite{zbMATH02573604,MR2211043}. In most of these references, $f$ is approximated by its interpolant, and it is generally assumed that the \emph{modified moments}

\begin{equation}\label{equ:modifiedmoments}
\int_{\Omega} K(x)p_{\ell}(x)\,\mathrm{d}\omega(x),\quad \ell = 1,2,\ldots,d_n,
\end{equation}
can be computed analytically by using special functions or efficiently by invoking some stable iterative procedures.

Therefore, the efficient hyperinterpolation takes the form of 
\begin{equation*}\begin{split}
    \mathcal{EL}_n f:=&\sum_{\ell=1}^{d_n}\left(\int_{\Omega} K\left(\mathcal{L}_n F\right) p_{\ell} \,\mathrm{d} \omega\right) p_{\ell} =\int_{\Omega} K\left[\sum_{\ell^{\prime}=1}^{d_n}\left(\sum_{j=1}^m w_j F(x_j) p_{\ell^{\prime}}(x_j)\right) p_{\ell^{\prime}}\right] p_{\ell} \,\mathrm{d} \omega \\
    =&\sum_{j=1}^m w_j\left(\sum_{\ell^{\prime}=1}^{d_n} p_{\ell^{\prime}}(x_j) \int_{\Omega} K p_{\ell^{\prime}} p_{\ell} \,\mathrm{d} \omega\right) F(x_j),
    \end{split}\end{equation*}
where $\int_{\Omega} K p_{\ell^{\prime}} p_{\ell} \,\mathrm{d} \omega$ can be efficiently evaluated via the modified moments \eqref{equ:modifiedmoments} by first expressing each product $p_{\ell^{\prime}} p_{\ell}$ in the orthonormal basis.

\section{Beyond function approximation}

While most literature on hyperinterpolation focuses on approximation properties, the method has also been extended to solve differential and integral equations. The discrete nature of hyperinterpolation, which is built upon quadrature rules, automatically incorporates quadrature directly into the numerical analysis of hyperinterpolation-based methods.

\subsection{Partial differential equations}
Numerical methods for approximating differential equations are predominantly based on either collocation or Galerkin principles. The focus of hyperinterpolation-based methods has been on Galerkin-type methods, as collocation approaches rely on a mature theory of multivariate polynomial interpolation, a theory that remains underdeveloped compared to its one-dimensional counterpart and thus presents significant theoretical challenges.

Consider an elliptical equation
\begin{equation}
    \label{equ:Lu}
    Lu = f
\end{equation}
subject to appropriate boundary conditions, where $L$ is a linear differential operator. Applying the degree-$n$ hyperinterpolation operator to both sides of \eqref{equ:Lu} yields $\mathcal{L}_n(Lu) = \mathcal{L}_nf$.
Expanding this using the orthogonal polynomial basis $\{p_{\ell}\}_{\ell=1}^{d_n}$ gives
\[\sum_{\ell=1}^{d_n}\langle Lu,p_{\ell}\rangle_m p_{\ell} = \sum_{\ell=1}^{d_n}\langle f,p_{\ell}\rangle_m p_{\ell},\]
where $\langle\cdot,\cdot\rangle_m$ denotes the discrete inner product \eqref{equ:discreteinnerproduct}. By the orthogonality of the basis, this simplifies to a system of discrete equations
\[\langle Lu,p_{\ell}\rangle_m = \langle f,p_{\ell}\rangle_m,\quad \ell = 1,2,\ldots,d_n,\]
which is a discrete version of the Galerkin scheme
$\langle Lu,p_{\ell}\rangle = \langle f,p_{\ell}\rangle$ for all $\ell = 1,2,\ldots,d_n$. For the discussion on hyperinterpolation on discrete Galerkin schemes as described above, we refer the reader to \cite{MR2659585,MR2776446,Gross2018Galerkin_hyperinterpolation,Gross2018spectral_hyperinterpolation,hansen2009norm}. 

Hyperinterpolation can also be extended to time-dependent problems. Consider the semilinear equation
\begin{equation}
\label{eq:semilinear}
\frac{\partial u(x,t)}{\partial t} = Lu(x,t) + f(x, u(x,t))
\end{equation}
equipped with suitable initial and boundary conditions, where $L$ is a linear differential operator and $ f(x,u(x,t))$ is a nonlinear terms. In this context, hyperinterpolation serves as a tool to handle the nonlinearity. By applying the hyperinterpolation operator to the nonlinear term $ f(x, u(x,t))$, it is effectively linearized at the discrete level, facilitating the construction of a spectral method for equations of the type \eqref{eq:semilinear}. As a representative example, the Allen--Cahn equation on spheres has been solved using this methodology in \cite{wu2023breaking}.

Hyperinterpolation provides a framework that inherently incorporates quadrature into the numerical analysis of methods derived from it. Even when quadrature rules fail to satisfy the exactness assumption \eqref{equ:exactness}, one can still define a hyperinterpolation operator and a corresponding numerical scheme based on the available discrete samples. To the best of the authors' knowledge, \cite{wu2023breaking} presents the only work to date that investigates hyperinterpolation-based methods for partial differential equations using such relaxed quadrature conditions, successfully integrating the theoretical results of Section \ref{sec:relaxed} into its numerical analysis.

\subsection{Integral equations}
The application of hyperinterpolation extends to the numerical solution of integral equations. Indeed, the approximation scheme was first conceived in \cite{MR878693} as a discrete orthogonal projection specifically for solving Fredholm integral equations of the second kind
\begin{equation}\label{equ:integral}
u({x})-\int_{\mathbb{S}^2} K({x}, {y}) u({y})\, \mathrm{d} \omega({y})=f({x}),
\end{equation}
which can be written symbolically as $(I-\mathcal{K})u = f$.

There are two main techniques in which hyperinterpolation plays a role. The first arises in projection-based methods for integral equations; see the original manuscript \cite{MR878693} by Atkinson and Bogomolny. By replacing the orthogonal projection operator with the hyperinterpolation operator, one obtains the discrete system:
\begin{equation*}
(I-\mathcal{L}_n\mathcal{K})u_n = \mathcal{L}_nf,
\end{equation*}
where $u_n\in\mathbb{P}_n$ is the numerical solution.

The second technique concerns kernels with singularities. When the kernel $K$ admits a factorization $K=K_1K_2$, where $K_1$ is singular and $K_2$ is continuous, a specialized technique can be applied. Similar to the principles of efficient hyperinterpolation, if the modified moments \eqref{equ:modifiedmoments} for the singular part $K_1$ are computable, the continuous component $K_2u$ in the integral operator can be replaced by its hyperinterpolant. This methodology, often combined with the first projection-based technique, is explored in \cite{MR2071394,MR3822242,MR1922922}. A key insight for its use with the singular kernel $1/|x-y|$ on spheres originated from Wienert in \cite{Wienert1990}, facilitating fast algorithms for direct and inverse acoustic scattering problems in $\mathbb{R}^3$.

Beyond projection-based methods, if one solely considers the second technique and selects a set of collocation points, the result is a quadrature-based method, also known as a Nystr\"{o}m method. This approach has been realized in a recent work \cite{an2024hyperinterpolation}, which builds on the results for relaxed quadrature rules from Section \ref{sec:relaxed}. A compelling open question is how to incorporate the results of Section \ref{sec:relaxed} into projection-based methods, thereby broadening their applicability to a wider range of quadrature rules that may not satisfy the restrictive exactness assumption \eqref{equ:exactness}.

\section{Concluding remarks}

In summary, hyperinterpolation has evolved from a discrete analogue of $L^2$ projection into a versatile framework for multivariate approximation. Its core elegance has inspired two key advances: the relaxation of the quadrature exactness assumption \eqref{equ:exactness} via Marcinkiewicz--Zygmund inequalities, and the creation of specialized variants for convergence, noise, and singularities. Despite its significance and practical utility, the term ``hyperinterpolation'' remains conspicuously absent from the indices of most multivariate approximation texts, thirty years after its introduction in 1995. We hope this will be different a generation from now.
\section{Acknowledgement}
The work was supported by the National Natural Science Foundation of China (Project No. 12371099).

\newpage
\bibliographystyle{siamplain}
\bibliography{myref}
\end{document}